\definecolor{darkred}{rgb}{1,0,0} 
\definecolor{darkgreen}{rgb}{0,0.8,0}
\definecolor{darkblue}{rgb}{0,0,1}
\numberwithin{equation}{section}
\newtheorem {Theorem}{Theorem}
\numberwithin{Theorem}{section}
\newtheorem {Lemma}[Theorem]    {Lemma}
\newtheorem {Corollary}[Theorem]{Corollary}
\theoremstyle{definition}
\theoremstyle{remark}
\newtheorem{Remark}[Theorem]{Remark}
\def    \eps    {\epsilon}
\newcommand{\CA}{{\mathcal A}}
\newcommand{\CS}{{\mathcal S}}
\newcommand{\supp}{\operatorname{supp}}
\newcommand{\ff}{{\mathfrak f}}
\newcommand{\fc}{{\mathfrak c}}
\newcommand{\fh}{{\mathfrak h}}
\newcommand{\fF}{{\mathfrak F}}
\newcommand{\fy}{{\mathfrak y}}
\newcommand{\hook}{\hookrightarrow}
\newcommand{\tal}{\tilde{\alpha}}
\newcommand{\tx}{\tilde{x}}
\newcommand{\tpi}{\tilde{\pi}}
\newcommand{\A}{{\mathcal A}}
\def    \C      {{\mathbb C}}
\def    \R      {{\mathbb R}}
\def    \Z      {{\mathbb Z}}
\def    \N      {{\mathbb N}}
\def    \Q      {{\mathbb Q}}
\def    \T      {{\mathbb T}}
\def    \CP     {{\mathbb C}{\mathbb P}}
\def    \12    {{\frac{1}{2}}}
\def    \p      {\partial}
\def    \Ext  {\operatorname{Ext}}
\def    \Hom  {\operatorname{Hom}}
\def    \HF     {\operatorname{HF}}
\def    \HC     {\operatorname{HC}}
\def    \CC     {\operatorname{CC}}
\def    \H     {\operatorname{H}}
\def    \PSL    {\operatorname{PSL}}
\def    \Tors    {\operatorname{Tors}}
\def    \MUCZ  {\operatorname{\mu_{\scriptscriptstyle{CZ}}}}
\begin{document}


\setlength{\smallskipamount}{6pt}
\setlength{\medskipamount}{10pt}
\setlength{\bigskipamount}{16pt}





\title[Conley Conjecture for Reeb Flows]{On the Conley Conjecture for
  Reeb Flows}

\author[Viktor Ginzburg]{Viktor L. Ginzburg}
\author[Ba\c sak G\"urel]{Ba\c sak Z. G\"urel}
\author[Leonardo Macarini]{Leonardo Macarini}

\address{Department of Mathematics, UC Santa Cruz, Santa Cruz, CA
  95064, USA} \email{ginzburg@ucsc.edu} 

\address{Department of Mathematics, University of Central Florida,
  Orlando, FL 32816, USA} \email{basak.gurel@ucf.edu}

\address{Universidade Federal do Rio de Janeiro, Instituto de
  Matem\'atica, Cidade Universit\'aria, CEP 21941-909, Rio de Janeiro,
  Brazil} \email{leonardo@impa.br}

\subjclass[2010]{53D40, 53D25, 37J10, 37J55} \keywords{Periodic
  orbits, contact forms and Reeb flows, contact and Floer homology,
  Conley conjecture, twisted geodesic flows}

\date{\today} 

\thanks{The work is partially supported by NSF grants DMS-1308501 (VG)
  and DMS-1414685 (BG) and the CNPq, Brazil (LM)}

\bigskip

\begin{abstract}   
In this paper we prove the existence of infinitely many closed Reeb
orbits for a certain class of contact manifolds. This result can be
viewed as a contact analogue of the Hamiltonian Conley conjecture. The
manifolds for which the contact Conley conjecture is established are
the pre-quantization circle bundles with aspherical base. As an
application, we prove that for a surface of genus at least two with a
non-vanishing magnetic field, the twisted geodesic flow has infinitely
many periodic orbits on every low energy level.
\end{abstract}

\maketitle

\tableofcontents


\section{Introduction}
\label{sec:intro}
In this paper we establish a contact analogue of the Hamiltonian
Conley conjecture -- the existence of infinitely many periodic orbits
-- for Reeb flows on the pre-quantization circle bundles with
aspherical base. As an application, we prove that for a surface of
genus at least two with non-vanishing magnetic field, the twisted
geodesic flow has infinitely many periodic orbits on every low energy
level.

To put these results in perspective, recall that the Hamiltonian
Conley conjecture asserts the existence of infinitely many periodic
orbits for every Hamiltonian diffeomorphism of a closed symplectic
manifold whenever the manifold meets some natural general
requirements. This is the case for manifolds with
spherically-vanishing first Chern class (of the tangent bundle) and
for negative monotone manifolds; see \cite{CGG, GG:gaps, He} and also
\cite{FH, Gi:CC, GG:nm, Hi, LeC, Ma,SZ}. It is important to note,
however, that the Conley conjecture, as stated, fails for some simple
manifolds such as $S^2$: an irrational rotation of the sphere about
the $z$-axis has only two periodic orbits, which are also the fixed
points; these are the poles. In fact, any manifold that admits a
Hamiltonian torus action with isolated fixed points also admits a
Hamiltonian diffeomorphism with finitely many periodic orbits. Among
these manifolds are $\CP^n$, the Grassmannians, and, more generally,
most of the coadjoint orbits of compact Lie groups as well as
symplectic toric manifolds. (There is also a variant of the Conley
conjecture for such manifolds, considered in \cite{Gu:hyp,Gu:nc} and
inspired by a celebrated theorem of Franks, \cite{Fr1,Fr2}, but this
conjecture is not directly related to our discussion.)

To summarize, the collection of all closed symplectic manifolds
naturally breaks down into two classes: those for which the Conley
conjecture holds and those for which the Conley conjecture fails. The
non-trivial assertion is then that the former class is non-empty and
even quite large. At this stage, we are far from understanding where
exactly the dividing line between the two classes is, but at least on
the level of proofs there seems to be no connection between the Conley
conjecture and the additive structure of the (ordinary or quantum)
homology of the manifold.

The situation with closed contact manifolds looks more involved even
if we leave aside such fundamental questions as the Weinstein
conjecture.

First of all, there is a class of contact manifolds for which every
Reeb flow has infinitely many closed orbits because the rank of
contact or symplectic homology grows as a function of index or some
other parameter connected with the order of iteration. This
phenomenon, studied in \cite{HM,McL}, generalizes and is inspired by
the results of \cite{GM} establishing the existence of infinitely many
closed geodesics for manifolds such that the homology of the free loop
space grows. (A technical but important fact underpinning the proof is
that the iterates of a given orbit can make only bounded contributions
to the homology; see \cite{GG:gap,GM,HM,McL} for various incarnations
of this result.)  By \cite{VPS} and \cite{AS,SW,Vi:f}, among contact
manifolds in this class are the unit cotangent bundles $ST^*M$
whenever $\pi_1(M)=0$ and the algebra $\H^*(M;\Q)$ is not generated by
one element, and some others; \cite{HM,McL}. This homologically forced
existence of infinitely many Reeb orbits has very different nature
from the Hamiltonian Conley conjecture where there is no growth of
homology: the Floer homology of a Hamiltonian diffeomorphism of a
closed manifold obviously does not change with iterations.

Then there are contact manifolds admitting Reeb flows with finitely
many closed orbits. Among these are, of course, the standard contact
spheres and, more generally, pre-quantization circle bundles over
symplectic manifolds admitting torus actions with isolated fixed
points (see \cite[Example 1.13]{Gu:pr}) including the Katok--Ziller
flows. Another important group of examples, also containing the
standard spheres, arises from contact toric manifolds; see
\cite{AM}. Note that these two classes overlap, but do not entirely
coincide.

Finally, there is, as we show in this paper, a \emph{non-empty} class
of contact manifolds for which every Reeb flow (meeting certain
natural index conditions) has infinitely many closed orbits, although
there is no obvious homological growth -- the rank of the relevant
contact homology remains bounded. One can expect the class of
manifolds for which the conjecture holds to be quite large, but at
this point we can prove such unconditional existence of infinitely
many closed Reeb orbits only for pre-quantization circle bundles of
aspherical manifolds. (Moreover, for the sake of simplicity, we also
make an additional assumption that the first Chern class of the
contact structure is atoroidal. Note also that the index conditions
mentioned above play a purely technical role, but are inherent in the
construction of the cylindrical contact homology utilized in the
proof.) This variant of the contact Conley conjecture is one of the
main results of the paper (Theorem \ref{thm:main}), and its proof,
drawing from \cite{GG:gaps} and also \cite{GHHM,HM}, clearly shows the
similarity of the phenomenon with the Hamiltonian Conley conjecture.

This picture is, of course, oversimplified and certainly not even
close to covering all the range of possibilities, even on the
homological level. (For instance, hypothetically, Reeb flows for
overtwisted contact structures have infinitely many periodic orbits,
but where should one place such contact structures in our
``classification''? See \cite{El,Yau} and also \cite{BvK} for further
details.)

It is also worth pointing out that our proof of the contact Conley
conjecture heavily relies, in some instances beyond the formal level,
on the machinery of cylindrical contact homology (see, e.g.,
\cite{Bo,Bo:thesis,SFT} and references therein), which is yet to be
fully put on a rigorous basis (see \cite{HWZ:SC,HWZ:poly}).

As an application of our main result, we prove the existence of
infinitely many periodic orbits for all low energy levels of twisted
geodesic flows on surfaces with non-vanishing magnetic field (Theorem
\ref{thm:magnetic}).

To be more precise, consider a closed Riemannian manifold $M$ and let
$\sigma$ be a closed 2-form on $M$. Equip $T^*M$ with the twisted
symplectic structure $\omega=\omega_0+\pi^*\sigma$, where $\omega_0$
is the standard symplectic form on $T^*M$ and $\pi\colon T^*M\to M$ is
the natural projection, and let $K$ be the standard kinetic energy
Hamiltonian on $T^*M$ corresponding to a Riemannian metric on $M$.
The Hamiltonian flow of $K$ on $T^*M$ describes the motion of a charge
on $M$ in the magnetic field $\sigma$ and is referred to as a twisted
geodesic or magnetic flow. In contrast with the geodesic flow (the
case $\sigma=0$), the dynamics of the twisted geodesic flow on an
energy level depends on the level. In particular, when $M$ is a
surface of genus $g \geq 2$, the example of the horocycle flow shows
that a symplectic magnetic flow need not have periodic orbits on all
energy levels. Note also that the dynamics of a twisted geodesic flow
crucially depends on whether one considers low or high energy levels
and, at least from a technical perspective, on whether $\sigma$ is
assumed to be exact or symplectic.

The existence problem for periodic orbits of a charge in a magnetic
field was first addressed in the context of symplectic geometry by
V.I. Arnold in the early 80s; see \cite{Ar:fs,Ar88}. Namely, Arnold
proved, as a consequence of the Conley--Zehnder theorem, the existence
of periodic orbits of a twisted geodesic flow on $\T^2$ with
symplectic magnetic field for all energy levels when the metric is
flat and all low energy levels for an arbitrary metric,
\cite{Ar88}. (It is still unknown if the second of these results can
be extended to all energy levels.)  Since Arnold's work, the problem
has been studied in a variety of settings. We refer the reader to,
e.g., \cite{Gi:newton} for more details and further references prior
to 1996 (see also \cite{Ta}) and to, e.g.,
\cite{AMP,AMMP,CMP,GG:capacity,GG:wm,Ke:m,Sc,Scn2,Us} for by far an
incomplete list of more recent results.

Here we focus on the case where the magnetic field form $\sigma$ is
symplectic (e.g., non-vanishing when $\dim M=2$), and we are
interested in dynamics on low energy levels. In this setting, in all
dimensions, the existence of at least one closed orbit on every
sufficiently low energy level was proved in \cite{GG:wm,Us}. It was
also conjectured, and proved for $M=\T^2$, in \cite{GG:wm} that in
fact every low energy level carries infinitely many periodic orbits
when $M$ is symplectically aspherical. (Although this conjecture is
merely one in a sequence of such hypothetical lower bounds (see, e.g.,
\cite{Ar:fs,Ar88,Gi:FA,Gi:mathz,Ke:m}), it differs from the previous
ones in that it takes into account periodic orbits of arbitrarily
large period, but not just the ``short'' orbits.) Thus the result of
the present paper completes the proof of the conjecture in the case
where $M$ is a surface. (See Remark~\ref{rmk:conj} for a further
discussion.)

\subsection*{Acknowledgments}
The authors are grateful to Fr\'ed\'eric Bourgeois for useful
discussions and remarks. A part of this work was carried out while
the first two authors were visiting ICMAT, Madrid, Spain, and IMBM,
\.{I}stanbul, Turkey, and also during the first author's visit to the
National Center for Theoretical Sciences (South), Taiwan and the third
author's visit to the Chern Institute of Mathematics, Tianjin, China.
The authors would like to thank those institutes for their warm
hospitality and support.

\section{Main results}
\label{sec:results}

\subsection{Conley conjecture for pre-quantization circle bundles}
\label{sec:main}
Consider a closed symplectic manifold $(M^{2n},\omega)$ such that the
form $\omega$, or more precisely its cohomology class $[\omega]$, is
\emph{integral}, i.e., $[\omega]\in \H^2(M;\Z)/\Tors$. Let $\pi\colon
P\to M$ be an $S^1$-bundle over $M$ with the first Chern class
$-[\omega]$. The bundle $P$ admits an $S^1$-invariant 1-form
$\alpha_0$ such that $d\alpha_0=\pi^*\omega$ and $\alpha_0(R_0)=1$,
where $R_0$ is the vector field generating the $S^1$-action on $P$. In
other words, when we set $S^1=\R/\Z$ and identify the Lie algebra of
$S^1$ with $\R$, the form $\alpha_0$ is a connection form on $P$ with
curvature $\omega$; see, e.g., \cite[Appendix A]{GGK} for a detailed
discussion of various sign and other conventions used in this setting.

Clearly, $\alpha_0$ is a contact form with Reeb vector field $R_0$,
and the connection distribution $\xi=\ker\alpha_0$ is a contact
structure on $P$. Up to a gauge transformation, $\xi$ is independent
of the choice of $\alpha_0$. The circle bundle $P$ equipped with this
contact structure or contact form is usually referred to as a
\emph{pre-quantization circle bundle} or a Boothby--Wang bundle.  Also
recall that a degree two (real) cohomology class on $P$ is said to be
\emph{atoroidal} if its integral over any smooth map $\T^2\to P$ is
zero. (Note that such a class is necessarily aspherical.)  Finally, in
what follows we will denote by $\ff$ the free homotopy class of the
fiber of $\pi$.

The main tool used in this paper is the cylindrical contact
homology. As is well known, to have this homology defined for a
contact form $\alpha$ on any closed contact manifold $P$ one has to
impose certain additional requirements on closed Reeb orbits of
$\alpha$; \cite{Bo,SFT}. Namely, we say that a non-degenerate contact
form $\alpha$ is \emph{index--admissible} if its Reeb flow has no
contractible closed orbits with Conley--Zehnder index $2-n$ or $2-n\pm
1$. (This not the standard term. Note also that $\dim P=2n+1$.) In
general, $\alpha$ or its Reeb flow is index--admissible when there
exists a sequence of non-degenerate index--admissible forms
$C^1$-converging to $\alpha$.

This requirement is usually satisfied once $(P,\alpha)$ has some
geometrical convexity properties. For instance, the Reeb flow on a
strictly convex hypersurface in $\R^{2m}$ is index--admissible;
\cite{HWZ:convex}. Likewise, as is observed in \cite{Be}, the twisted
geodesic flow on a low energy level for a symplectic magnetic field on
a surface of genus $g\geq 2$ is index admissible; see the proof of
Theorem \ref{thm:magnetic} for more details.

Recall also that closed orbit is said to be \emph{weakly
  non-degenerate} when at least one of its Floquet multipliers is
different from 1; cf.\ \cite{SZ}. A form $\alpha$ or its Reeb flow is
weakly non-degenerate when all closed Reeb orbits are weakly
non-degenerate. For instance, a non-degenerate orbit (flow) is weakly
non-degenerate. Clearly, weak non-degeneracy is a $C^\infty$-generic
condition.

The main result of the paper is the following theorem, which, as is
pointed out in the introduction, can be viewed as an analogue of the
Conley conjecture for contact forms $\alpha$ on the pre-quantization
bundle $P$ over $M$, supporting the (cooriented) contact structure
$\xi$, i.e., such that $\ker\alpha=\xi$ and $\alpha(R_0)>0$.

\begin{Theorem}[Contact Conley Conjecture] 
\label{thm:main}
Assume that 
\begin{itemize}
\item[(i)]  $M$ is aspherical, i.e., $\pi_r(M)=0$ for all $r\geq 2$, and
\item[(ii)] $c_1(\xi)\in \H^2(P;\R)$ is atoroidal.
\end{itemize}
Let $\alpha$ be an index--admissible contact form on $P$ supporting
$\xi$. Then the Reeb flow of $\alpha$ has infinitely many closed
orbits with contractible projections to $M$. Assume furthermore that
the Reeb flow has finitely many closed Reeb orbits in the free
homotopy class $\ff$ of the fiber and that these orbits are weakly
non-degenerate. Then for every sufficiently large prime $k$ the Reeb
flow of $\alpha$ has a simple closed orbit in the class $\ff^k$.
\end{Theorem}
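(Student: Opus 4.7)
The plan is to deploy cylindrical contact homology $\HC_*^{\ff^k}(\alpha)$ restricted to the fiber classes $\ff^k$, adapting the strategy of the Hamiltonian Conley conjecture from \cite{GG:gaps} via the contact techniques of \cite{GHHM,HM}. Under assumptions (i) and (ii), the homotopy sequence of $S^1\to P\to M$ together with asphericity of $M$ shows that $\ff^k$ are pairwise distinct free homotopy classes in $P$, and atoroidality of $c_1(\xi)$ ensures that $\HC_*^{\ff^k}(\alpha)$ is absolutely $\Z$-graded and invariant under index-admissible deformations of $\alpha$ through $\xi$-supporting forms. Any closed Reeb orbit in a class $\ff^k$ projects to a contractible loop in $M$, so it suffices to produce infinitely many fiber-class orbits.

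For the model computation, I would perturb the standard contact form by a $C^2$-small Morse function $f$ on $M$, setting $\alpha_f=(1+\epsilon\pi^*f)\alpha_0$. For $\epsilon$ sufficiently small, the only closed Reeb orbits of $\alpha_f$ in class $\ff^k$ (for $k$ in any prescribed range) are $k$-fold iterates of fibers over critical points of $f$; these are non-degenerate, and the chain complex reduces, after an index shift $s_k$ growing linearly in $k$, to the Morse complex of $f$. This yields $\HC_*^{\ff^k}(\alpha_f)\cong \H_{*-s_k}(M;\Q)$, non-zero with support of width $\dim M=2n$. Invariance under index-admissible homotopies then propagates the computation to an arbitrary index-admissible $\alpha$ supporting $\xi$, giving $\HC_*^{\ff^k}(\alpha)\neq 0$ for every $k\geq 1$ and, at a minimum, one closed orbit in each fiber class $\ff^k$.

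To upgrade existence to the Conley conjecture assertion, I would argue by contradiction. Suppose only finitely many simple Reeb orbits $\gamma_1,\dots,\gamma_m$ of $\alpha$ lie in fiber classes. After a non-degenerate approximation, every generator of the complex computing $\HC_*^{\ff^k}$ is an iterate $\gamma_j^\nu$. The local contact homology of $\gamma_j^\nu$ is concentrated in a degree window of width bounded independently of $\nu$ around $\nu\,\hat{\mu}(\gamma_j)$, and by the gap/persistence lemma of \cite{GG:gap,GG:gaps} it vanishes for most $\nu$ unless $\gamma_j$ is ``totally degenerate''. A counting argument bounding these contributions contradicts the uniform non-vanishing of $\HC_*^{\ff^k}(\alpha)$ across a band of degrees of width $2n$ around $s_k$, proving the first claim. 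The second is a refinement: under the weak non-degeneracy hypothesis and finitely many orbits in $\ff$, for $k$ prime any generator in class $\ff^k$ coming from an $\ff$-orbit must be exactly $\gamma_j^k$ for some $\gamma_j\in\ff$. The width bound for weakly non-degenerate iterates shows that the local homology of $\gamma_j^k$ has support of width strictly less than $2n$; summing over the finitely many $\gamma_j$ still cannot cover the full support of $\HC_*^{\ff^k}(\alpha)$ for $k$ large, forcing a simple orbit in $\ff^k$.

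The main obstacle is establishing the precise form of the gap/persistence statement for local contact homology of iterates in the present framework, and ensuring that the perturbations and continuation arguments remain within the class of index-admissible forms so that the invariance of cylindrical contact homology applies. Additional care will be required to make the local-to-global width and counting arguments go through in the absence of full non-degeneracy of $\alpha$, which is where the index-admissibility assumption and the weak non-degeneracy hypothesis in the second statement play their technical roles.
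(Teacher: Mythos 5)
Your ``non-degenerate'' half is essentially the paper's argument: with trivializations compatible with iteration one gets $\HC_*(\xi;\ff^k)\cong \H_{*+n}(M;\Q)$ (the shift is actually zero, not linearly growing, since the mean index of the iterated fiber vanishes for this choice), for a large prime $k$ every closed orbit in $\ff^k$ must be $x_i^k$ by primitivity of $\ff$ (this is where torsion-freeness of $\pi_1(M)$, i.e.\ asphericity, enters via Lemma \ref{lemma:fiber2}), and then the support interval $[k\Delta_i-n,\,k\Delta_i+n]$ of the local contact homology, with strict endpoints in the weakly non-degenerate case, shows that the single degree $n$ is missed, contradicting $\HC_n(\xi;\ff^k)=\Q$. (Phrasing this as ``cannot cover the full support'' is imprecise; what matters is that one specific degree carried by the global homology is not hit.)

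The genuine gap is in the first assertion, in the degenerate case. If one of the finitely many orbits in the fiber class is a symplectically degenerate maximum with zero mean index, its iterates contribute a generator of local homology exactly in degree $n$ in every class $\ff^k$, while the total rank of $\HC_*(\xi;\ff^k)=\H_{*+n}(M;\Q)$ is bounded uniformly in $k$ and the local homologies of iterates of finitely many simple orbits are uniformly bounded by \eqref{eq:lch-bound}. Hence no counting or degree-width argument of the kind you sketch can yield a contradiction: there is no homological growth here, and your own caveat ``unless $\gamma_j$ is totally degenerate'' is precisely the loophole that your ``counting argument'' does not close. The paper handles this case by an entirely different mechanism: the Hingston-type Theorem \ref{thm:sdm} of \cite{GHHM}, giving nonvanishing of \emph{filtered linearized} contact homology in degree $k\Delta+n+1$ (one above the local support) in the action windows $(kc,\,kc+\eps)$, applied to the symplectically aspherical disk-bundle filling $W=P\times D^2/S^1$; linearized rather than cylindrical homology is used precisely because the cylindrical analogue is not available without extra index hypotheses (Remark \ref{rmk:difficulty}). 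One then still needs the fact that the classes in $\tpi_1(P)$ that die in $W$ are exactly the $\ff^l$ (Lemma \ref{lemma:fiber} and the exact sequence of the bundle) and the equidistribution-of-actions argument from the proof of Corollary \ref{cor:sdm} to see that the orbits so produced have contractible projections and cannot all be iterates of finitely many simple ones. None of this appears in your outline, so the degenerate half of the theorem is unproved as written.
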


Note that under the conditions of the theorem, all iterates $\ff^k$,
$k\in\N$, are distinct; see the discussion below and Lemma
\ref{lemma:fiber}. Hence, the second part of the theorem implies the
existence of infinitely many periodic orbits and is really a
refinement of the first part, under the weak non-degeneracy condition.

\begin{Remark}[Growth] 
\label{rmk:growth}
It readily follows from Theorem \ref{thm:main} that, when the Reeb
flow of $\alpha$ is weakly non-degenerate, the number of simple
periodic orbits of the Reeb flow of $\alpha$ with period (or
equivalently action) less than $a\gg 0$ is bounded from below by $C
\cdot a/\ln a$, where $C >0$ depends only on $\alpha$.  In fact, we
have the lower bound $C_0 \cdot a/\ln a - C_1$, where $C_0=\inf
\alpha(R_0)$ and $C_1$ depends only on $\alpha$. These growth lower
bounds are typical for the Hamiltonian Conley conjecture type results
mentioned in the introduction; see also \cite{Hi:geod} for the case of
closed geodesics on $S^2$. (In dimension two, however, stronger growth
results have been established in some cases; see, e.g., \cite{LeC} and
\cite[Prop.\ 4.13]{Vi} and also \cite{BH,FH,Ke}.)  Note also that the
weak non-degeneracy requirement here plays a technical role and
probably can be eliminated; see Remark \ref{rmk:difficulty}.
\end{Remark}

Finally, we would like to point out a similarity between Theorem
\ref{thm:main} and the main result of \cite{Gu:nc} where a variant of
the Hamiltonian Conley conjecture is established for non-contractible
orbits.

\subsection{Discussion: topological conditions}
\label{sec:discussion}
The conditions on $(P,\xi)$ imposed in the theorem and, in particular,
condition (i), i.e., the requirement that $M$ is an Eilenberg--MacLane
space $K(\pi,1)$, deserve a detailed discussion.

First, however, let us introduce some notation to be used throughout
the paper. Let $\tpi_1(P)$, where $P$ is an arbitrary manifold, be the
collection of the free homotopy classes of maps $S^1\to P$. We
identify $\tpi_1(P)$ with the set of conjugacy classes in
$\pi_1(P)$. Although in general $\tpi_1(P)$ is not a group, the powers
of an element of this set are well defined. We denote by 1, the image
of the unit in $\tpi_1(P)$.

The role of condition (i) in the proof of the theorem is
two-fold. Namely, it can be replaced by the following two conditions,
which are both consequences of~(i):

\begin{itemize}

\item[(i-a)] the class $[\omega]$ is aspherical, i.e.,
  $\omega|_{\pi_2(M)}=0$,
\item[(i-b)] the fundamental group $\pi_1(M)$ has no torsion.

\end{itemize}
A possibly non-obvious point here is that (i) implies (i-b). (This
fact, for any finite-dimensional CW-complex $M$, is sometimes
attributed to P.A. Smith. Here is a proof taken from \cite{Lu}: Recall
that $\H^*(\Z_k;\Z)=\Z_k$ for all even degrees $*$. Let $\Z_k\subset
\pi_1(M)$ be a finite cyclic subgroup. We can take $\tilde{M}/\Z_k$,
where $\tilde{M}$ is the universal covering of $M$, as the classifying
space $B\Z_k$. If $M$ were finite-dimensional, we would have
$\H^*(\Z_k;\Z)=\H^*(B\Z_k;\Z)=0$ for $*>\dim M$. A contradiction.)

We will show in Section \ref{sec:fiber} that when (i-a) holds, all
free homotopy classes $\ff^k$, where $k\in\N$ and $\ff\in\tpi_1(P)$ is
the class of a fiber, are distinct and none of these classes is
trivial. This fact is used in a variety of ways, e.g., to ensure that
the natural grading of the contact homology by the free homotopy
classes $\ff^k$, $k\in\N$, is actually a grading by $\N$. This is
essential because the class $\ff^k$ takes the role of the order of
iteration $k$ in the proof of the Hamiltonian Conley conjecture.

Condition (i-b) is used in the proof to show that the free homotopy
class $\ff$ is primitive and that, more generally, for every $k\in\N$
the only solutions $\fh\in\tpi_1(P)$ and $l\geq 0$ of the equation
$\fh^l=\ff^k$ are $\fh=\ff^r$, for some $r\in\N$, and $l=k/r$; see
Lemma \ref{lemma:fiber2}. These properties of $\ff$ are needed to
guarantee that the closed Reeb orbits detected by the filtered contact
homology of $\alpha$ are simple. Condition (i-b) plays an absolutely
crucial, albeit technical, role in the proof of Theorem \ref{thm:sdm}.

On the other hand, condition (i-a) is clearly necessary for the
theorem.  This condition obviously fails for, say, $\CP^n$ or complex
Grassmannians, as does the assertion of the theorem. In fact, both
condition (i-a) and the assertion of the theorem fail whenever the
base $M$ admits a Hamiltonian circle action with isolated fixed
points; cf.\ \cite[Example 1.14]{Gu:pr}. As is shown in that example,
this fact is essentially the reason for the existence of asymmetric
Finsler metrics with finitely many closed geodesics on, say, the
spheres; see \cite{Ka} and also \cite{Zi}.

Finally, condition (ii) is imposed only for the sake of simplicity and
can be eliminated once a suitably defined Novikov ring is incorporated
into the contact homology.  This condition is automatically met when
$(M,\omega)$ is monotone or negative monotone, i.e., $c_1(TM)=\lambda
[\omega]$ in $\H^2(M;\R)$ (but not only on $\pi_2(M)$) for some
$\lambda\in\R$.

All conditions of the theorem on $(P,\xi)$ are obviously satisfied
when $M$ is a surface of genus greater than or equal to one or when
$M=\T^{2n}$ or for negative monotone (or monotone, if they exist)
hyperbolic K\"ahler manifolds. Furthermore, the requirements of the
theorem are met by the product $M_1\times M_2$ whenever they are met
by $M_1$ and~$M_2$.

\begin{Remark}
  It is worth pointing out that the $S^1$-bundle $\pi\colon P\to M$ is
  not unique and is not quite determined by $[\omega]$. Consider the
  ``duality'' exact sequence
$$
0\to\Ext_\Z(\H_1(M;\Z);\Z)\to \H^2(M;\Z)\to\Hom_{\Z}(\H_2(M;\Z);\Z)\to 0.
$$
We can identify the last term in this sequence with the integral de
Rham cohomology $\H^2(M;\Z)/\Tors$, which the class $[\omega]$ belongs
to, and the first term with $T=\Tors(\H_1(M;\Z))$. Thus we have
$$
0\to T\to \H^2(M;\Z)\to \H^2(M;\Z)/ \Tors\to 0 .
$$
The $S^1$-bundle $\pi\colon P\to M$ is uniquely determined by its
first Chern class $u$ which is a lift of $-[\omega]$ to $\H^2(M;\Z)$,
but not just by $-[\omega]$. (Of course, there is no ambiguity when
$T=0$.)  Finally, it is not hard to see from the proof of Theorem
\ref{thm:main} that condition (i-b) can be replaced by the requirement
that for any cyclic subgroup $G\subset \pi_1(M)$, the pull-back of $u$
to $\H^2(G;\Z)$ is zero.
\end{Remark}

\subsection{Application: a charge in a magnetic field}
\label{sec:magnetic}
Let now $M$ be a closed orientable surface equipped with a Riemannian
metric and $\sigma$ be a closed two-form (a magnetic field) on
$M$. The two-form $\omega=\omega_0+\pi^*\sigma$, where $\pi$ is the
natural projection $T^*M\to M$ and $\omega_0$ is the standard
symplectic form on $T^*M$, is symplectic. Here we will asume that
$\sigma$ is also symplectic (i.e., non-vanishing, since $M$ is a
surface). As is mentioned in the introduction, the motion of a unit
charge on $M$ is governed by the twisted geodesic flow, i.e., the
Hamiltonian flow with respect to $\omega$ of the standard kinetic
energy Hamiltonian $K\colon T^*M\to \R$, given by $K(p)=\|p\|^2/2$ in
self-explanatory notation.

Let us now focus on the levels $P_\eps=\{K=\eps\}$ for small values of
$\eps>0$.

\begin{Theorem}
\label{thm:magnetic}
Assume that $M$ has genus $g\geq 2$. Then for every small $\eps>0$,
the flow of $K$ has infinitely many simple periodic orbits on $P_\eps$
with contractible projections to $M$. Moreover, assume that the flow
has finitely many periodic orbits in the free homotopy class $\ff$ of
the fiber. Then, without any non-degeneracy assumptions, for every
sufficiently large prime $k$ there is a simple periodic orbit in the
class $\ff^k$.
\end{Theorem}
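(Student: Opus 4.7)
The plan is to reduce Theorem~\ref{thm:magnetic} to Theorem~\ref{thm:main} by realizing $P_\eps$, for small $\eps>0$, as a pre-quantization circle bundle satisfying the hypotheses of Theorem~\ref{thm:main}, and then to remove the weak non-degeneracy hypothesis by an argument specific to dimension three.

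First, for symplectic $\sigma$ on the surface $M$, a standard normal-form argument in a neighborhood of the zero section of $(T^*M,\omega_0+\pi^*\sigma)$ shows that for every sufficiently small $\eps>0$ the level $P_\eps=\{K=\eps\}$ is of contact type, and that the contact manifold so obtained is contactomorphic to a pre-quantization circle bundle $\pi\colon P\to M$ equipped with its pre-quantization contact structure $\xi$. Under this identification, the class $\ff$ of the fiber of $\pi$ corresponds to the free homotopy class of short Larmor orbits of the magnetic flow.

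Next I would verify the topological hypotheses of Theorem~\ref{thm:main}. Condition~(i) is immediate: $M=\Sigma_g$ with $g\geq 2$ has hyperbolic universal cover and is therefore a $K(\pi,1)$. For condition~(ii), the horizontal distribution on any pre-quantization bundle satisfies $\xi\cong\pi^*TM$ as a complex line bundle via $d\pi$, so $c_1(\xi)=\pi^*c_1(TM)$. Any smooth map $T^2\to P$ projects to a map $T^2\to\Sigma_g$, which must have degree zero since $\chi(T^2)=0$ while $\chi(\Sigma_g)<0$; consequently $c_1(\xi)$ pairs trivially with every integral $2$-torus in $P$ and is atoroidal. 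Index-admissibility of the Reeb flow on $P_\eps$ is the content of \cite{Be}. Theorem~\ref{thm:main} then yields the first assertion of Theorem~\ref{thm:magnetic}.

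For the refinement, I would need to eliminate the weak non-degeneracy hypothesis on the finitely many closed Reeb orbits in class $\ff$. The idea is a localized $C^\infty$-perturbation tailored to dimension three: replace $\alpha$ by a sequence of contact forms $\alpha_j\to\alpha$ agreeing with $\alpha$ outside disjoint small tubular neighborhoods of those orbits, arranged so that each closed orbit of $\alpha_j$ in class $\ff$ is weakly non-degenerate (which in dimension three excludes only the single parabolic type with both Floquet multipliers equal to $1$) and is still finite in number. For any fixed large prime $k$, the perturbations can be chosen small enough that neither the closed orbits in class $\ff^k$ of moderate period nor the relevant filtered cylindrical contact homology are affected, so applying Theorem~\ref{thm:main} to $\alpha_j$ yields a simple closed orbit $\gamma_j$ of $\alpha_j$ in class $\ff^k$ whose period is bounded linearly in $k$ by a constant independent of $j$. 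An Arzel\`a--Ascoli argument then extracts a closed Reeb orbit of $\alpha$ in class $\ff^k$, and primality of $k$ combined with the action bounds rules out its being a non-trivial iterate of a shorter orbit, so it is simple. Carrying out this perturbation/limit procedure so that simplicity and the free homotopy class are preserved in the limit is the principal technical obstacle.
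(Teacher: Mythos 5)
Your reduction of the first assertion to Theorem~\ref{thm:main} is essentially the paper's argument: identify $P_\eps$ with the pre-quantization bundle of $(M,\sigma)$ (the paper does this explicitly via $\lambda_0+\sqrt{2\eps}\,\lambda_1$ rather than a normal form, using $[\sigma]=\kappa e(M)$ for $g\neq 1$), check (i), (ii), and index-admissibility; for the latter the paper supplies the actual mechanism — mean-index estimates $\Delta<2\chi(M)+o(1)$ for short contractible orbits (from \cite{Be}) and $-O(T)$ for long orbits of a perturbation (from \cite{GG:wm}), combined with \eqref{eq:indexes} — rather than simply citing \cite{Be}, but this part of your plan is sound.

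The second assertion is where there is a genuine gap, and it sits exactly at the point you flag as ``the principal technical obstacle.'' Your perturb-and-pass-to-the-limit scheme fails in the only case that matters, namely when one of the orbits $x_i$ in class $\ff$ is an SDM (if none is, Remark~\ref{rmk:sdm_enough} already gives the conclusion with no perturbation needed). First, there is a quantifier clash: in the non-degenerate case of Theorem~\ref{thm:main} the prime $k$ must satisfy $k|\Delta|>2n$ for every closed orbit of the perturbed form in class $\ff$ with $\Delta\neq 0$; a perturbation supported near a totally degenerate $x_i$ with $\Delta(x_i)=0$ typically produces orbits with arbitrarily small non-zero mean index, so the admissible range of $k$ for $\alpha_j$ deteriorates as $j\to\infty$, and you cannot fix $k$ first and then shrink the perturbation. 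Second, and more fundamentally, even if for each $j$ you obtain a simple orbit $\gamma_j$ of $\alpha_j$ in class $\ff^k$ with period $O(k)$, nothing prevents the $\gamma_j$ from being orbits winding $k$ times inside the perturbation neighborhood of $x_i$ and converging, as $j\to\infty$, precisely to the iterate $x_i^k$ — whose action $k\A(x_i)$ is also linear in $k$, so primality plus your action bound cannot exclude it. This degeneration is exactly what one expects for an SDM, so the limit argument cannot be repaired without new input. The paper circumvents this by never perturbing: it applies the SDM result, Theorem~\ref{thm:sdm}, with $\fc=\ff\neq 1$, to the exact filling $W\subset T^*M$ bounded by $P_\eps$ and a high energy level $P'$ of a hyperbolic metric (so $[\omega]=0$, $c_1(TW)=0$, $P_\eps\hookrightarrow W$ is a homotopy equivalence, and the flow on $P'$, being conjugate to the geodesic flow, has no closed orbits in any class $\ff^k$); this produces, for large $k$, orbits in class $\ff^k$ with action in $(kc,kc+\eps)$ and mean index in $(0,2n+1)$, and the index window together with Lemma~\ref{lemma:fiber2} rules out the iterates $x_i^k$, yielding simplicity for large primes $k$. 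Without some analogue of this filling argument (or of the hypothetical cylindrical version of Theorem~\ref{thm:sdm} discussed in Remark~\ref{rmk:difficulty}), your proposal does not prove the second assertion.
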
 

We will prove this theorem in Section \ref{sec:pf-magn}.

\begin{Remark}
\label{rmk:conj}
As was observed in \cite[Prop.\ 1.5]{GG:wm}, an analogous result also
holds when $M=\T^2$. This is an immediate application of the Conley
conjecture proved in this case in \cite{FH}. Furthermore, it was
conjectured in \cite{GG:wm} that in all dimensions the twisted
geodesic flow has infinitely many periodic orbits on every low energy
level whenever $\sigma$ is symplectic and $(M,\sigma)$ is
symplectically aspherical.  Thus Theorem \ref{thm:magnetic} settles
the two-dimensional case of this conjecture. We see no reason why an
analogue of Theorem \ref{thm:magnetic} should hold when
$M=S^2$. Although this is not entirely obvious, we tend to think that
an example can be found by applying a variant of the Katok--Ziller
construction, \cite{Ka,Zi}, to a constant magnetic field on $S^2$ to
obtain a twisted geodesic flow with symplectic $\sigma$ and finitely
many simple orbits on (some) arbitrarily low energy levels; cf.\
\cite[Theorem 1.3]{Scn1} and \cite[Section 7]{GG:capacity}.
\end{Remark}

\begin{Remark} When, in the setting of Theorem \ref{thm:magnetic}, a
  twisted geodesic flow has finitely many periodic orbits in the free
  homotopy class of the fiber, the growth lower bounds from Remark
  \ref{rmk:growth} apply without any non-degeneracy
  assumptions. Furthermore, we can replace the contact action $a$ by
  the period of the Hamiltonian flow, but not in general by the
  Hamiltonian action. Finally, note that when $M$ is a surface of
  genus $g\geq 1$, condition (i-b) is easy to verify
  geometrically. Indeed, for $M=\T^2$, condition (i-b) obviously
  holds.  For $g\geq 2$, $\pi_1(M)$ is a subgroup of the group of
  isometries $\PSL(2;\R)$ of the hyperbolic plane. As is well known,
  the elements of $\pi_1(M)$ are necessarily hyperbolic isometries (or
  hyperbolic or parabolic, when $M$ is not compact), and hence have
  infinite order.
\end{Remark}

\section{Preliminaries: contact homology}
\label{sec:ch}
Our goal in this section is to review the definitions and results
concerning contact homology necessary for the proof and to set our
conventions.

\subsection{Generalities: cylindrical and linearized contact homology}
\label{sec:cch}

We start our discussion by briefly recalling the definition and basic
properties of the cylindrical and linearized contact homology in the
setting we are interested in. Our goal here is to mainly set our
conventions and notation. We refer the reader to, e.g., \cite{Bo,SFT}
and references therein, for a much more detailed account.

Let $(P^{2n+1},\xi)$ be a closed contact manifold with atoroidal first
Chern class $c_1(\xi)$. Fix a free homotopy class $\ff\in\tpi_1(P)$.

Let $\alpha$ be a non-degenerate, index--admissible (i.e., without
contractible closed Reeb orbits of index $2-n$ or $2-n\pm 1$), contact
form supporting $\xi$. The cylindrical contact homology
$\HC_*(\xi;\ff)$ of $\xi$ for the class $\ff$ is the homology of a
certain complex $\CC_*(\alpha;\ff)$ generated by the (good) closed
Reeb orbits of $\alpha$ in the homotopy class $\ff$ with the
differential counting rigid holomorphic cylinders in the
symplectization of $P$ asymptotic to periodic orbits. Although the
complex obviously depends on $\alpha$ (and some auxiliary structures),
its homology is well-defined and, in particular, independent of the
form.

Likewise, for $a<b$ outside the action spectrum $\CS(\alpha)$, the
filtered complex $\CC^{(a,\,b)}_*(\alpha;\ff)$ is generated by the
orbits $x$ with action
\begin{equation}
\label{eq:action}
\CA(x):=\int_x\alpha
\end{equation}
in the interval $I:=(a,\,b)$. The resulting homology
$\HC^{I}_*(\alpha;\ff)$ depends on $\alpha$, but not on the auxiliary
structures. Furthermore, it is invariant under deformations of the end
points $a$ and $b$ and the form $\alpha$ as long as the end points
remain outside the action spectrum; see \cite[Proposition 5]{GHHM}.
In particular, the homology is also defined ``by continuity'' for
degenerate index--admissible contact forms. (Recall from Section
\ref{sec:main} that a degenerate form is index--admissible if there
exists a sequence of non-degenerate index--admissible forms
$C^1$-converging to $\alpha$.)  When $a=-\infty$ and $b=+\infty$, we
recover the total cylindrical contact homology $\HC_*(\xi;\ff)$.

We note a minor difference of this definition from the standard one,
where $\CC^{I}_*(\alpha;\ff)$, for $I=(a,\,b)$, is set to be the
quotient
$\CC^{(0,\,b)}_*(\alpha;\ff)/\CC^{(0,\,a)}_*(\alpha;\ff)$. The
advantage of this approach is that, similarly to the Hamiltonian case
(see, e.g., \cite[Section 4.2.1]{GG:capacity}) the complex
$\CC^{I}_*(\alpha;\ff)$ is defined and, as is easy to see, its
homology is equal the standard $\HC^{I}_*(\alpha;\ff)$ even when only
the closed Reeb orbits with action in the window $I$ are
non-degenerate and, when contractible, have index different from $2-n$
or $2-n\pm 1$, provided that the regularity requirements are met.

The grading of the cylindrical contact complex and the homology
deserves a special discussion. First, note that to have the
Conley--Zehnder index $\MUCZ(x)$ of a closed non-degenerate Reeb orbit
$x$ in the class $\ff$ defined, we need to have a trivialization of
$\xi |_x$. The standard recipe calls for fixing a trivialization (up
to homotopy) of $\xi$ along a reference loop in $\ff$. Connecting $x$
to the reference loop by a cylinder and extending the trivialization
along the cylinder, we obtain a well-defined (up to homotopy)
trivialization of $\xi$ along $x$. Then the condition that $c_1(\xi)$
is atoroidal guarantees that the resulting trivialization is
independent of the cylinder. In what follows we will work with the
collection of classes $\ff^k$, $k\in\N$, assuming, unless $\ff=1$,
that all classes $\ff^k$ are distinct, and none of these classes is
trivial.  For our purposes it is crucial to choose trivializations
compatible with iterations. In other words, we fix a trivialization of
$\xi$ along a loop in the class $\ff$ and the trivialization for the
class $\ff^k$ is then obtained by taking the $k$-th iteration, in the
obvious sense, of this trivialization. (It is essential at this point
that all classes $\ff^k$ are distinct and none of them is trivial.)

Then we also have the mean index $\Delta(x)$ well-defined regardless
of whether $x$ is degenerate or not. (See, e.g., \cite{Lo,SZ} for the
definition of the mean index and its properties.) Moreover, our
convention guarantees that the mean index is homogeneous:
\begin{equation}
\label{eq:hom}
\Delta(x^r)=r\Delta(x)
\end{equation}
for all $r\in\N$, where $x$ is, of course, in one of the free homotopy
classes $\ff^k$. Furthermore, recall that for any choice of
trivializations we have
\begin{equation}
\label{eq:indexes}
|\MUCZ(\tx)-\Delta(x)|\leq n
\end{equation}
for every sufficiently small non-degenerate perturbation $\tx$ of $x$,
and that the inequality is strict when $x$ is weakly non-degenerate.

When $\ff=1$, the condition that $c_1(\xi)$ is atoroidal can be
relaxed, and it suffices to require this class to be aspherical:
$c_1(\xi)|_{\pi_2(P)}=0$. Then, as is well known, every contractible
closed Reeb orbit carries a canonical (up to homotopy) trivialization
and \eqref{eq:hom} holds automatically.

Finally, in the context of this paper it is much more convenient to
depart from the standard convention and have the contact homology
graded by the Conley--Zehnder index of the orbit without the shift of
degree by $(n+1)-3$. (Note that the dimension of $P$ is $2n+1$.) Thus,
throughout the paper, \emph{the contact homology is graded by the
  Conley--Zehnder index}.

\begin{Remark}
  If instead of assuming that $c_1(\xi)$ is atoroidal we imposed a
  stronger condition that $c_1(\xi)=0$ in $\H^2(P;\Z)$, we could have
  obtained a trivialization of $\xi$ along every loop, compatible with
  iterations, by fixing a non-vanishing section, up to homotopy, of
  the determinant bundle $\wedge_\C^n\xi$; cf.\ \cite{Es,GGo}. We also
  note that, when $\alpha$ is non-degenerate, our definition of the
  filtered contact homology still makes sense even if the end-points
  of $I$ are in $\CS(\alpha)$. Of course, in this case the homology is
  very sensitive to the deformations of $I$ and $\alpha$.
\end{Remark}

Finally, for $\fF\subset \tpi_1(P)$ set
$$
\HC_*^{I}(\alpha;\fF)=\bigoplus_{\fh\in\fF}\HC_*^{I}(\alpha;\fh).
$$
As is pointed out above, we will usually have $\fF=\{\ff^k\mid
k\in\N\}$ where all classes $\ff^k$ are distinct and none of these
classes is trivial.

In several instances we will also need to work with linearized contact
homology. Below we only briefly specify our conventions. For a
detailed discussion of the subject, we refer the reader to, e.g.,
\cite{Bo,SFT} and, in particular, to \cite[Section~3.1]{BO}.

In this case we start with a strong symplectic filling $W$ of
$(P,\xi=\ker\alpha)$, i.e., a compact symplectic manifold $(W,\omega)$
such that $\p W= P$ and $\omega|_P=d\alpha$ for some $\alpha$ and that
a natural orientation compatibility condition is satisfied.  The form
$\alpha$ need not be index--admissible, but the linearized contact
homology is defined only when a filling exists and the homology
depends on the filling. (However, a filling for $\alpha$ can be
adjusted and turned into a filling for any other form supporting $\xi$
without changing the total linearized contact homology.)  When working
with linearized contact homology, we need to replace $\tpi_1(P)$ by
$\tpi_1(W)$ everywhere in the above discussion. We use the notation
$\HC_*^I(\alpha;W,\fc)$ for the filtered linearized contact homology,
where now $\fc\in\tpi_1(W)$. This is a vector space over $\Q$ which
depends on $I$ and $\alpha$ and $\fc$ (as in the cylindrical case) and
also on $W$. Furthermore, when $\fc\neq 1$, we assume for the sake of
simplicity that the filling is exact (i.e., $[\omega]=0$) and that
$c_1(TW)=0$ in $H^2(P;\Z)$. (This condition can be relaxed.)

If $\fc=1$, it suffices to require that $W$ is symplectically
aspherical, i.e., $[\omega]|_{\pi_2(W)}=0=c_1(TW)|_{\pi_2(W)}$. Note
that in this case the contact action given by \eqref{eq:action} is, in
general, different from the symplectic area bounded by an orbit in
$W$, unless the orbits is contractible in $P$ or the filling is
exact. In what follows, the action is always taken to be the contact
action as defined by \eqref{eq:action}.

\subsection{Local contact homology}
\label{sec:lch}
Next, let us review the construction of the local contact homology and
the relevant results. Here we follow \cite{GHHM,HM} and we refer the
reader to these two papers for proofs and details.

Consider an isolated closed orbit $x$, not necessarily simple, of the
Reeb flow of a contact form $\alpha$. The local contact homology
$\HC_*(x)$ of $x$ is the homology of the complex $\CC_*(x,\tal)$
generated by the (good) periodic orbits which $x$ splits into under a
non-degenerate perturbation $\tal$ of $\alpha$ with the differential
defined again by counting rigid holomorphic cylinders in the
symplectization of a tubular neighborhood of $x$. The resulting
complex depends on $\alpha$, but its homology is well-defined. (Note
that here it is essential that $x$ is isolated.) The absolute grading
of the local contact homology is defined once we fix a trivialization
of $\xi |_x$. In the setting of Section \ref{sec:cch}, we can use for
instance the trivialization arising from our global trivialization
convention. As in the global case, throughout the paper \emph{the
  local contact homology is graded by the Conley--Zehnder index}.

For instance, when $x$ is non-degenerate and good, $\HC_*(x)$ is $\Q$
concentrated in one degree, equal to $\MUCZ(x)$. When $x$ is
non-degenerate and bad, $\HC_*(x)=0$.

The local contact homology of periodic orbits of $\alpha$ are building
blocks of $\HC_*(\xi;\fF)$. Namely, there exists a spectral sequence
with $E^1=\bigoplus_x\HC_*(x)$, where the sum is over all (not
necessarily simple) closed Reeb orbits of $\alpha$ in $\fF$,
converging to $\HC_*(\xi;\fF)$. As a result, $\HC_m(\xi;\fF)=0$ if
there exists a form $\alpha$ such that $\HC_m(x)=0$ for all $x$ in
$\fF$.

To be more precise, assume that all closed Reeb orbits of $\alpha$ are
isolated and, as a consequence, the action spectrum $\CS(\alpha)$ is
discrete: $\CS(\alpha)=\{c_1,c_2,\ldots\}$, where
$c_1<c_2<\cdots$. Pick arbitrary positive real numbers $a_i$
separating the points of the spectrum:
$$
0<a_0<c_1<a_1<c_2<a_2<c_3<\cdots .
$$
It is easy to see that
$$
\HC_*^{(a_p,\,a_{p+1})}(\alpha;\fF)=\bigoplus_x\HC_*(x),
$$
where the sum is taken over all $x$ in $\fF$ with $\CA(x)=c_p$.
Hence, the increasing filtration $\CC_*^{(0,\,a_p)}(\alpha;\fF)$ of
$\CC_*(\alpha)$ gives rise to a spectral sequence with $E^1_{p,q}=
\bigoplus_x\HC_{p+q}(x)$ converging to $\HC_*(\xi;\fF)$.

In general, $\HC_*(x)$ is supported in the interval
$[\Delta(x)-n,\Delta(x)+n]$ or, in other words, $\HC_m(x)$ can be
non-zero only for $m$ in this interval. (This readily follows from
\eqref{eq:indexes}.) Thus we can write, using self-explanatory
notation,
\begin{equation}
\label{eq:supp}
\supp \HC_*(x)\subset [\Delta(x)-n,\,\Delta(x)+n],
\end{equation}
and, when $x$ is weakly non-degenerate, the inclusion is strict at
both end-points of the interval, i.e., $\Delta(x)\pm n$ are not in the
support.

For our purposes it is essential to understand how the local contact
homology behaves under iterations. Let now $x$ be a simple closed Reeb
orbit. A positive integer $k$, the order of iteration, is said to be
\emph{admissible} if the Floquet multiplier 1 occurs with the same the
multiplicity for $x$ and $x^k$, i.e., $k$ is not divisible by the
order of any root of unity among the Floquet multipliers of $x$. For
instance, $k$ is admissible when both $x$ and $x^k$ are non-degenerate
or, as the opposite extreme, any $k$ is admissible when $x$ is
\emph{totally degenerate} (i.e., all Floquet multipliers are equal to
1). Furthermore, every sufficiently large prime is admissible
regardless of the Floquet multipliers of $x$.  Note also that an
admissible iteration of an isolated periodic orbit is automatically
isolated; see \cite{CMPY,GG:gap}.

For an isolated simple periodic orbit $x$, there exists a sequence
$s_k\in\Z$ indexed by all admissible iterations of $x$ such that
\begin{equation}
\label{eq:lch-bound}
\dim\HC_*(x^k)\leq \dim\HC_{*-s_k}(x)
\textrm{ where }
\lim_{k\to\infty}\frac{s_k}{k}=\Delta(x).
\end{equation}
Moreover, $s_k=\Delta(x)k$ when $x$ is totally degenerate. We refer
the reader to \cite{HM} for a proof of this fact; see also
\cite{GHHM}. In particular, when all iterations of $x$ are isolated,
the sequence $\dim \HC_*(x^k)$ is bounded as a function of $k$. These
results can be thought of as generalizations of the classical
Gromoll--Meyer theorem (see \cite{GM}) to contact homology; see also
\cite{McL} for an analogue of the Gromoll--Meyer theorem for
symplectic homology.

The proof of \eqref{eq:lch-bound} is based on the relation between the
local contact homology of an isolated orbit, say $y$, and the local
Floer homology $\HF_*(\psi)$ of its Poincar\'e return map
$\psi$. Namely, for a simple orbit $y$ we just have
$\HC_*(y)\cong\HF_*(\psi)$ with our grading conventions. When the
orbit is iterated, i.e., $y=x^k$ and $\psi=\varphi^k$ where $x$ is
simple, $\varphi$ is the Poincar\'e return map of $x$ and $k$ is
admissible, the relation is more involved. However, even in this case,
we still have $\dim\HC_*(y)\leq \dim \HF_*(\psi)$. (See \cite{GHHM,HM}
for the proofs. For a simple orbit, the result can also be established
by repeating word-for-word the proof of \cite[Proposition
4.30]{EKP}. The example where $x$ and $y$ are both non-degenerate and
$y$ is bad shows that a strict inequality does occur.) Finally, a
version of \eqref{eq:lch-bound} holds for the local Floer homology
(see \cite{GG:gap} for a proof), i.e., to be more precise,
$$
\dim\HF_*(\varphi^k)=\dim\HF_{*-s_k}(\varphi)
\textrm{ with }
\lim_{k\to\infty}\frac{s_k}{k}=\Delta(x),
$$
where again $s_k=\Delta(x)k$ when $x$ is totally degenerate, and
\eqref{eq:lch-bound} follows.

\subsection{Symplectically degenerate maxima}
\label{sec:sdm} 
A closed Reeb orbit $x$ is said to be a \emph{symplectically
  degenerate maximum} or SDM if $\HC_{\Delta(x)+n}(x)\neq 0$, i.e.,
the local contact homology of $x$ is non-trivial at the right
end-point of the maximal support interval. Such orbits are necessarily
totally degenerate, and $\HC_*(x)$ is $\Q$ when $*=\Delta(x)+n$ and
zero otherwise. An iteration of an SDM orbit is again an SDM. We refer
the reader to \cite{GHHM,HM} for the proofs of these facts. Note also
that, although both the mean index and the grading of $\HC_*(x)$
depend on the trivialization of $\xi|_x$, the notion of an SDM is
independent of the trivialization.

The role of SDM Reeb orbits in our proof of Theorem \ref{thm:main} is
similar to that of SDM orbits for Hamiltonian diffeomorphisms in the
proof of the Hamiltonian Conley conjecture;
\cite{Gi:CC,GG:gaps,Hi}. We have the following result where, for
technical reasons, we need to use the linearized contact homology; see
Remark \ref{rmk:difficulty}.

\begin{Theorem}[\cite{GHHM}]
\label{thm:sdm}
Let $(W,\omega)$ be a strong symplectic filling of a closed contact
manifold $(P,\xi)$ and let $\fc\in \tpi_1(W)$. Assume furthermore that
at least one of the following two conditions is satisfied:
\begin{itemize}

\item $W$ is symplectically aspherical, i.e.,
  $[\omega]|_{\pi_2(W)}=0=c_1(TW)|_{\pi_2(W)}$, and $\fc=1$, or

\item $[\omega]=0$ in $\H^2(P;\R)$ and $c_1(TW)=0$ in $H^2(P;\Z)$.

\end{itemize}
Let $x$ be a simple isolated closed Reeb orbit of a contact form
$\alpha$ on $(P,\xi)$. Assume that $x$ is an SDM with mean index
$\Delta=\Delta(x)$ and action $c=\A(x)$ and that $x$ is in the class
$\fc$. Then for any $\eps>0$ there exists $k_\eps\in \N$ such that
$$
\HC_{k\Delta+n+1}^{(kc,\,kc+\eps)}(\alpha;W,\fc^k)\neq 0\textrm{ for
  all } k>k_\eps .
$$
\end{Theorem}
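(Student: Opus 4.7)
The strategy closely follows the SDM step in the proofs of the Hamiltonian Conley conjecture (cf.\ \cite{Gi:CC, GG:gaps, Hi}), translated to the Reeb setting by means of the identification between local contact homology and local Floer homology of the Poincar\'e return map recalled in Section \ref{sec:lch}. I would proceed in three phases.

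\emph{Phase 1: local normal form.} Since $x$ is an SDM it is totally degenerate, so the linearized return map at $x$ is the identity, and the same holds for every iterate $x^k$. In a tubular neighborhood $U \cong S^1 \times D^{2n}$ of $x^k$ choose Weinstein-type contact coordinates in which $\alpha$ is in standard form and in which the Reeb dynamics of $\alpha$ near $x^k$ is encoded by a time-dependent generating Hamiltonian germ $h^{(k)}_t$ on $(D^{2n},0)$ whose time-one flow is the return map $\varphi^k$. Using the local characterization of SDMs (cf.\ \cite{GG:gaps, Hi}), after a symplectic change of coordinates $h^{(k)}_t$ has the origin as an isolated strict local maximum with $h^{(k)}_t(0)=0$.

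\emph{Phase 2: perturbation producing the required generator.} Following \cite{GHHM}, construct a non-degenerate perturbation $\tilde\alpha_k$ of $\alpha$, supported in $U$ and obtained by modifying the generating data by a small bump function together with a tiny reparametrization of the Reeb flow adjusting periods. The perturbation is designed so that the SDM orbit $x^k$ splits into a finite collection of non-degenerate closed Reeb orbits with contact action in the window $(kc,\,kc+\eps)$. A Morse--Bott analysis of the resulting ``rim'' family of orbits produced by the bump yields, among these, a distinguished non-degenerate orbit $y_k$ in the class $\fc^k$ with Conley--Zehnder index exactly $k\Delta + n + 1$. This is the contact analogue of the key generator constructed in \cite[Prop.~5.1]{Gi:CC} in the Hamiltonian Conley conjecture proof.

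\emph{Phase 3: nontriviality and passage to linearized homology.} Show that $[y_k]$ is a nontrivial class in $\HC^{(kc,\,kc+\eps)}_{k\Delta+n+1}(\tilde\alpha_k; W, \fc^k)$. First, $[y_k]$ is a cycle because any component of its differential would lie in degree $k\Delta + n$ with action in $(kc,\,kc+\eps)$, and by the local homology calculation the only such generator is the single generator of $\HC_{k\Delta+n}(x^k)=\Q$, which (by the action pattern of the perturbation) lies at the bottom of the window rather than matching $[y_k]$. Second, $[y_k]$ is not a boundary since any such boundary would come from an orbit of action exceeding $kc+\eps$ in degree $k\Delta + n + 2$, excluded by choosing $\eps$ smaller than the next action gap. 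The topological hypotheses on $W$ -- symplectic asphericity when $\fc=1$, or $[\omega]|_P=0$ and $c_1(TW)|_P=0$ when $\fc\neq 1$ -- ensure the augmentation defining the linearized complex is well-defined and compatible with the construction, so the nontriviality transfers to the linearized theory. Finally, invariance of filtered linearized contact homology under deformations that keep the action endpoints off the spectrum (Section \ref{sec:cch}) transports the nontriviality from $\tilde\alpha_k$ back to $\alpha$.

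\emph{Main obstacle.} The critical step is Phase 2: producing an orbit of index $k\Delta + n + 1$, which sits \emph{above} the local contact homology support $[k\Delta-n,\,k\Delta+n]$ of $x^k$ itself and therefore cannot be obtained by a naive Morse perturbation on the $2n$-dimensional transversal. The mechanism must exploit the additional freedom of perturbing the contact form (as opposed to a Hamiltonian on the transversal), including the ability to slightly change the period of nearby orbits, combined with a delicate Morse--Bott index computation for the resulting family of new orbits created by the bump. A secondary but unavoidable technical difficulty is the passage from cylindrical to linearized contact homology, which is exactly where the topological hypotheses on the filling $W$ enter and cannot be dispensed with.
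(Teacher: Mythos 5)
There is a genuine gap, and it sits exactly where you flag the ``main obstacle'': Phase 2 is not merely delicate, it is impossible as described. Your candidate generator $y_k$ of index $k\Delta+n+1$ is supposed to arise from a perturbation of $\alpha$ supported in a tubular neighborhood of $x^k$ and small enough that the new orbits stay in the action window $(kc,\,kc+\eps)$ and the class $\fc^k$. But for any sufficiently small non-degenerate perturbation, every orbit born from $x^k$ has Conley--Zehnder index in $[k\Delta-n,\,k\Delta+n]$ by \eqref{eq:indexes} (equivalently, this is why \eqref{eq:supp} holds), so index $k\Delta+n+1$ is out of reach no matter how cleverly the bump or the period reparametrization is chosen; if instead you allow a large perturbation, you lose both the localization of the new orbits near $x^k$ and the action control, and the final ``transfer by invariance'' back to $\alpha$ breaks down because the invariance of the filtered homology requires the endpoints $kc$, $kc+\eps$ to stay off the action spectrum along the whole deformation, which you cannot arrange. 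Phase 3 has a related defect: even granting $y_k$, showing it is a nontrivial cycle requires controlling the differential against \emph{all} orbits of the (perturbed) form with action in the window, not only those split off from $x^k$; orbits of $\alpha$ far from $x^k$ with action in $(kc,\,kc+\eps)$ and index $k\Delta+n$ or $k\Delta+n+2$ are not excluded by your argument.

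Note also that the paper does not prove Theorem \ref{thm:sdm}; it quotes it from \cite{GHHM}. The mechanism there (alluded to in Remark \ref{rmk:difficulty}, and modeled on the Hamiltonian SDM arguments of \cite{Gi:CC,GG:gaps}) is of a different nature: one does not exhibit an explicit cycle for a perturbation of $\alpha$, but sandwiches $\alpha$ between model forms $\alpha_\pm$ adapted to the totally degenerate local geometry of the SDM, computes the filtered linearized contact homology of these models, and shows that the resulting monotonicity/continuation map in degree $k\Delta+n+1$ and action window $(kc,\,kc+\eps)$ is nonzero; since this map factors through $\HC_{k\Delta+n+1}^{(kc,\,kc+\eps)}(\alpha;W,\fc^k)$, the latter cannot vanish. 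In particular the orbits the theorem detects are genuinely new orbits with action slightly above $kc$, not descendants of $x^k$ under a local perturbation -- which is precisely why a construction localized at $x^k$, as in your Phases 2--3, cannot establish the statement.
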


This theorem is proved in \cite{GHHM}, although it is stated slightly
differently in that paper. We emphasize that in Theorem \ref{thm:sdm}
and Corollary \ref{cor:sdm} below the manifold $P$ need not be a
pre-quantization circle bundle, and hence these results apply to a
broader class of manifolds than Theorem \ref{thm:main}.

\begin{Remark}
\label{rmk:interval}
Note also that in Theorem \ref{thm:sdm}, as in similar results in the
Hamiltonian setting (see, e.g., \cite[Proposition 4.7]{Gi:CC},
\cite[Theorem 1.7]{GG:gaps} and \cite[Theorem 1.5]{He}), one should,
strictly speaking, replace the action interval by $(k c+\delta,\,k
c+\eps)$ for some arbitrarily small $\delta\in (0,\,\eps)$ and require
$\eps$ to be outside a certain zero measure set to make sure that the
end points of the interval are not in the action spectrum.
\end{Remark}

As a consequence of Theorem \ref{thm:sdm}, we obtain

\begin{Corollary}[\cite{GHHM}]
\label{cor:sdm}
In the setting of Theorem \ref{thm:sdm}, the Reeb flow of $\alpha$ has
infinitely many simple periodic orbits.
\end{Corollary}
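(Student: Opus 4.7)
My plan is to argue by contradiction. Suppose the Reeb flow of $\alpha$ has only finitely many simple closed orbits $x_1, \ldots, x_N$, with actions $c_i = \CA(x_i)$ and mean indices $\Delta_i = \Delta(x_i)$; every closed Reeb orbit of $\alpha$ is then an iterate $x_i^m$. I would apply Theorem \ref{thm:sdm} with a parameter $\eps > 0$ to be chosen later. This yields, for each sufficiently large $k$, a non-zero class in $\HC_{k\Delta+n+1}^{(kc,\,kc+\eps)}(\alpha; W, \fc^k)$. The action-filtration spectral sequence whose $E^1$-page is $\bigoplus_y \HC_*(y)$ (the sum taken over closed Reeb orbits $y$ in the class $\fc^k$ with $\CA(y) \in (kc,\,kc+\eps)$) then produces, for each such $k$, an orbit $y_k$ with $\HC_{k\Delta+n+1}(y_k) \neq 0$. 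Writing $y_k = x_{i(k)}^{m(k)}$ and passing by pigeonhole to an infinite subsequence along which $i(k) = i_0$ is constant, I reduce to iterates of a single simple orbit $x_{i_0}$.

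Next I would extract two numerical constraints. The action window gives $\delta_k := m(k)\, c_{i_0} - kc \in (0,\,\eps)$. The support inclusion \eqref{eq:supp} applied to $y_k$, combined with the homogeneity \eqref{eq:hom} of the mean index, gives $m(k)\Delta_{i_0} - k\Delta \in [1,\,2n+1]$. Rewriting the latter as
$$
\frac{\Delta_{i_0}}{c_{i_0}}\,\delta_k + k\left(\frac{c\,\Delta_{i_0}}{c_{i_0}} - \Delta\right) \in [1,\,2n+1],
$$
and noting that the first summand stays bounded as $k \to \infty$, the unboundedness of $k$ along the subsequence forces the rigidity $\Delta_{i_0}/c_{i_0} = \Delta/c$.

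With this rigidity, the degree constraint collapses to $(\Delta/c)\delta_k \in [1,\,2n+1]$. If $\Delta \le 0$, the left-hand side is non-positive (since $\delta_k > 0$ and $c > 0$), which is an immediate contradiction. If $\Delta > 0$, one obtains the lower bound $\delta_k \ge c/\Delta > 0$; choosing $\eps < c/\Delta$ at the outset -- allowable since Theorem \ref{thm:sdm} holds for every $\eps > 0$ -- then contradicts $\delta_k < \eps$. In either case the initial assumption of finitely many simple orbits fails.

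The main delicate step I expect is extracting the individual contributing orbit $y_k$ from the non-vanishing of the filtered linearized contact homology: this requires invoking the action-filtration spectral sequence whose $E^1$ terms are local contact homologies (as recalled in Section \ref{sec:lch}) and checking that it carries over to the linearized setting used in Theorem \ref{thm:sdm}. The remaining steps are elementary algebraic manipulations on the action and degree data combined with the homogeneity of the mean index.
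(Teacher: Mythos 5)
Your argument is correct in outline, but it takes a genuinely different route from the paper. The paper's proof uses only the \emph{action} information from Theorem \ref{thm:sdm}: assuming finitely many simple orbits $z_i$ with actions $a_i$, the orbit $y_k$ produced for each large $k$ forces $0<\|kc\|_{a_i}<\eps$ for some $i$, and this is then ruled out by an elementary rationality/equidistribution dichotomy for the sequence $kc$ modulo $a_i\Z$ (for $c\in a_i\Q$ the distances are bounded away from $0$ unless they vanish; for $c\notin a_i\Q$ equidistribution gives a positive density of $k$ with $\|kc\|_{a_i}>\eps$). You instead exploit the \emph{degree} $k\Delta+n+1$ of the nonvanishing class, localize it at a single orbit via the action-filtration spectral sequence of Section \ref{sec:lch}, and combine \eqref{eq:supp} with \eqref{eq:hom} to extract the resonance rigidity $\Delta_{i_0}/c_{i_0}=\Delta/c$ and then a contradiction for $\eps<c/\Delta$ (or immediately when $\Delta\le 0$). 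Your numerics check out, and the degree-localization step you flag is indeed available (it is the same spectral sequence the paper invokes, and local contact homology does not see the filling). What the paper's softer argument buys is that it needs neither the index data nor any compatibility of trivializations, which matters because in the generality of Theorem \ref{thm:sdm} the simple orbit $x_{i_0}$ underlying $y_k$ need not be contractible in $W$ (its class in $\pi_1(W)$ could, e.g., be torsion when $\fc=1$), so the identity $\Delta(y_k)=m(k)\Delta_{i_0}$ you use is not automatic: it requires choosing trivializations along $x_{i_0}$ compatible with iteration and checking they agree with the capping (or reference-loop) trivializations used to grade $\HC_*^{(kc,kc+\eps)}(\alpha;W,\fc^k)$. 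This can be arranged under the hypotheses of Theorem \ref{thm:sdm} ($c_1(TW)=0$, or $c_1(TW)|_{\pi_2(W)}=0$ with $\fc=1$, using branched-cover cappings), but it is a point your write-up glosses over; also note Remark \ref{rmk:interval} on keeping the endpoints $kc$, $kc+\eps$ out of the action spectrum. Your approach does yield a small bonus the paper's does not state, namely the mean-index/action resonance $\Delta_{i_0}/c_{i_0}=\Delta/c$ for any hypothetical finite orbit set, while the paper's argument has the advantage of transplanting verbatim into the degenerate case of Theorem \ref{thm:main}, where only the action window is available for orbits constrained to lie in prescribed homotopy classes.
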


Note however that this result, in such a general setting, affords no
control on the free homotopy classes of the simple orbits or their
growth rate. In contrast with its Hamiltonian counterpart, the
corollary is not entirely obvious. For the sake of completeness and
because the argument is used in the proof of Theorem \ref{thm:main},
we include a detailed proof of the corollary; cf.\ \cite[Section
3.2]{GG:gaps}.

\begin{proof}
  By Theorem \ref{thm:sdm}, given $\eps>0$, for every sufficiently
  large $k$, there exists a closed Reeb orbit $y_k$ such that
  $kc<\CA(y_k)<ck+\eps$.

  Arguing by contradiction, assume that $\alpha$ has only finitely
  many simple closed Reeb orbits $z_1,\ldots,z_r$. Then, for every
  large $k$, we have $y_k=z_i^{m_k}$ for at least one orbit $z_i$. Set
  $a_i=\CA(z_i)$. We have $kc<m_ka_i<kc+\eps$. Once $\eps<a_i$ for all
  $i$, it follows that
\begin{equation}
\label{eq:pf-cor}
0<\|kc\|_{a_i}<\eps \textrm{ for all } k \textrm { and } i,
\end{equation}
where $\|t\|_a$ stands for the distance from $t\in\R$ to the nearest
point in $a\Z$.

We will show that this is impossible: when $\eps>0$ is sufficiently
small there is a sequence $k_j\to\infty$ such that either
$\|k_jc\|_{a_i}=0$ or $\|k_jc\|_{a_i}>\eps$ for every $k_j$ and $i$.

We consider two cases: $c\in a_i\Q$ and $c\not\in a_i\Q$. In the
former case, there exists $\delta_i>0$ such that for every $k$ either
$\|kc\|_{a_i}=0$ or $\|kc\|_{a_i}>\delta_i$. In other words, when
$c\in a_i\Q$ and $\eps<\delta_i$, \eqref{eq:pf-cor} fails for all $k$.

In the latter case, the sequence $ck$ is equidistributed in the circle
$\R/a_i\Z$. Thus, for every $\delta<a_i/2$, we have
$\|kc\|_{a_i}>\delta$ with probability $1-2\delta/a_i$. It follows
that, when $\eps>0$ is sufficiently small, for all $i$ such that
$c\not\in a_i\Q$ the condition $\|kc\|_{a_i}>\eps$ is satisfied with
positive probability, i.e., for a positive density sequence
$k_j$. \end{proof}

\begin{Remark}[Symplectically Degenerate Minima] A sister notion of an
  SDM is that of a symplectically degenerate minimum (SDMin) obtained
  by replacing the right end point of the maximal support interval by
  the left end point. In other words, an isolated Reeb orbit $x$ is
  said to be an SDMin if $\HC_{\Delta(x)-n}(x)\neq 0$.  This notion is
  also of interest in Hamiltonian and contact dynamics; see
  \cite[Remark 1.3]{GHHM} and \cite{He:cot}. Symplectically degenerate
  maxima and minima have very similar properties, and variants of
  Theorem \ref{thm:sdm} and Corollary \ref{cor:sdm} hold when $x$ is
  an SDMin with now
  $\HC_{k\Delta-n-1}^{(kc-\eps,\,kc)}(\alpha;W,\fc^k)\neq 0$.
\end{Remark}

\begin{Remark}
\label{rmk:difficulty}
We expect an analogue of Theorem \ref{thm:sdm} to hold in the context
of cylindrical contact homology. However, the proof from \cite{GHHM}
does not readily translate to this setting without extra assumptions
on $\alpha$ along the lines of strong index positivity/negativity in
addition to $\alpha$ being index--admissible.  The difficulty is that
it is not clear how to make the forms $\alpha_\pm$, used in the proof
to ``estimate'' the contact homology of $\alpha$, index--admissible
without an extra condition of this type. This analogue of Theorem
\ref{thm:sdm} is of interest because, for instance, it would allow one
to eliminate the weak non-degeneracy requirement in the second part of
Theorem \ref{thm:main} and in the growth lower bounds from Remark
\ref{rmk:growth}.
\end{Remark}

\section{Proof of the contact Conley conjecture}
\label{sec:proof}

In this section we prove Theorems \ref{thm:main} and
\ref{thm:magnetic}, starting with some elementary preliminary
observations.

\subsection{The free homotopy class of the fiber}
\label{sec:fiber}
Let, as in Section \ref{sec:main}, $\pi\colon P\to M$ be a principle
$S^1$-bundle with the first Chern class $-[\omega]$, and let $\ff$ be
the free homotopy class of the fiber of $\pi$. In this section we show
that requirements (i-a) and (i-b) guarantee that the classes $\ff^k$,
$k\in\N$, satisfy the conditions mentioned in Section
\ref{sec:discussion} and needed for the proof of Theorem
\ref{thm:main}. For instance, we will show that $\ff$ is primitive and
all classes $\ff^k$ are distinct. Our first result is

\begin{Lemma}
\label{lemma:fiber}
Assume that condition (i-a) holds: $\omega|_{\pi_2(M)}=0$. Then
$\ff^k=\ff^l$ in $\tpi_1(P)$ only when $k=l$ and, in particular,
$\ff^k\neq 1$ for $k\neq 0$.
\end{Lemma}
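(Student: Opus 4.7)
The plan is to exploit the long exact sequence of homotopy groups associated with the principal $S^1$-fibration $S^1\hookrightarrow P\xrightarrow{\pi} M$:
$$
\pi_2(P)\to \pi_2(M)\xrightarrow{\partial}\pi_1(S^1)\to\pi_1(P)\to\pi_1(M)\to 0.
$$
The two structural inputs I need are the identification of $\partial$ with pairing against the first Chern class, and the centrality in $\pi_1(P)$ of the based lift of the fiber class.

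For the first input, I would invoke the classical identification of the connecting map $\partial\colon\pi_2(M)\to\pi_1(S^1)\cong\Z$ with evaluation of $c_1(P)=-[\omega]$ on the Hurewicz image of a spherical class. In explicit terms, $\partial[\sigma]=-\int_\sigma\omega$, so hypothesis (i-a) (i.e., $\omega|_{\pi_2(M)}=0$) forces $\partial\equiv 0$. Exactness of the sequence then produces an injection $\Z=\pi_1(S^1)\hookrightarrow\pi_1(P,p_0)$, so the based lift $\tilde{\ff}\in\pi_1(P,p_0)$ of the fiber class has infinite order.

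For the second input, I would use the standard fact that for any principal bundle with connected structure group, the image of $\pi_1(G)\to\pi_1(P)$ lies in the center of $\pi_1(P)$. In our setting this is seen directly: the free $S^1$-action on $P$ provides, for any loop $\gamma$ based at $p_0$, a free homotopy between $\tilde{\ff}\cdot\gamma$ and $\gamma\cdot\tilde{\ff}$ obtained by dragging $\gamma$ once around the fiber. As a consequence, the conjugacy class of $\tilde{\ff}^k$ in $\pi_1(P,p_0)$ is the singleton $\{\tilde{\ff}^k\}$, so $\ff^k\in\tpi_1(P)$ is represented uniquely by $\tilde{\ff}^k$.

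Combining the two inputs, $\ff^k=\ff^l$ in $\tpi_1(P)$ is equivalent to $\tilde{\ff}^k=\tilde{\ff}^l$ in $\pi_1(P,p_0)$, and the infinite order of $\tilde{\ff}$ forces $k=l$. Specializing to $l=0$ gives $\ff^k\neq 1$ for $k\neq 0$. The only step requiring any care is the cohomological identification of $\partial$ with pairing against $c_1(P)$; everything else is a routine diagram chase in the homotopy long exact sequence.
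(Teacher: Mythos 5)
Your argument is correct and follows essentially the paper's route: both use the homotopy long exact sequence of $S^1\to P\to M$, the identification of the connecting map $\pi_2(M)\to\pi_1(S^1)$ with evaluation of $[\omega]$ (so (i-a) gives an injection $\Z=\pi_1(S^1)\hookrightarrow\pi_1(P)$), and conjugation-invariance of the fiber class to conclude that $\ff^k=\ff^l$ forces $k=l$. The only divergence is the justification of that invariance -- you invoke centrality of the image of $\pi_1(S^1)$ coming from the circle action, whereas the paper uses normality of $\Z$ in $\pi_1(P)$ together with orientability of the bundle to rule out $gfg^{-1}=f^{-1}$ -- and your direct justification should be phrased as a based homotopy (restrict $(s,t)\mapsto\gamma(s)\cdot\delta(t)$ to the two boundary paths of the square), since a merely free homotopy between $\tilde{\ff}\gamma$ and $\gamma\tilde{\ff}$ holds automatically and proves nothing.
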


Note that the lemma would be absolutely obvious if the class of the
fiber were non-zero in $\H_1(P;\Z)/\Tors$. However, clearly the image
of $\ff$ in the homology is a torsion class when $[\omega]\neq 0$, and
a proof is due.

\begin{proof} Consider the homotopy long exact sequence of $\pi\colon
  P\to M$. We claim that, since $\omega|_{\pi_2(M)}=0$, the connecting
  map $\p\colon \pi_2(M)\to \pi_1(S^1)$ is trivial. Indeed, the image
  $\p(s)$ of a class $s\in\pi_2(M)$ is equal to
  $\left<\omega,s\right>\cdot f$, where $f\in\pi_1(S^1)$ is the class
  of the fiber oriented by $R_0$. By the assumption,
  $\omega|_{\pi_2(M)}=0$, and we have $\p=0$.

  Thus the $\pi_1$-part of the long exact sequence turns into the
  short exact sequence
\begin{equation}
\label{eq:seq}
1\to\pi_1(S^1)\to\pi_1(P)\to\pi_1(M)\to 1 ,
\end{equation}
and hence $\pi_1(S^1)=\Z$ is a normal subgroup of $\pi_1(P)$. In other
words, for any $g\in \pi_1(P)$, the conjugation by $g$ is an
automorphism of $\pi_1(S^1)=\Z$. Then $gfg^{-1}=f^{\pm 1}$ because
$f^{\pm 1}$ are the only generators. (We are using here multiplicative
notation, for, in general, $\pi_1(P)$ is not commutative.) Moreover,
in fact, $gfg^{-1}=f$ since $\pi\colon P\to M$ is a principle
$S^1$-bundle and hence orientable. Now it follows that $f^k$ is
conjugate to $f^l$ only when $k=l$. In particular, $f^k$ is (conjugate
to) $1$ only when $k=0$.
\end{proof}

It also follows from the exact sequence \eqref{eq:seq} that the only
elements in $\tpi_1(P)$ which project to $1\in\tpi_1(M)$ are $\ff^k$,
$k\in\Z$, i.e., the elements of $\pi_1(S)$.  Next, we have

\begin{Lemma}
\label{lemma:fiber2}
Assume that condition (i-b) is met: $\pi_1(M)$ is torsion free. Then
for every $k\in\N$ the only solutions $\fh\in\tpi_1(P)$ and $l\geq 0$
of the equation $\fh^l=\ff^k$ are $\fh=\ff^r$, for some $r\in\N$, and
$l=k/r$. (In particular, $\ff$ is primitive.)
\end{Lemma}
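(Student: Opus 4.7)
The plan is to exploit the short exact sequence
\[
1\to \pi_1(S^1)=\langle f\rangle \to \pi_1(P)\to \pi_1(M)\to 1
\]
obtained in Lemma~\ref{lemma:fiber}, together with the crucial observation, already established in the proof of that lemma, that $f$ is \emph{central} in $\pi_1(P)$ (since $gfg^{-1}=f$ for every $g\in\pi_1(P)$). Centrality of $f$ implies that every power $f^k$ is central, so its conjugacy class in $\pi_1(P)$ is the singleton $\{f^k\}$. Consequently, the equation $\fh^l=\ff^k$ in $\tpi_1(P)$ lifts, for any representative $h\in\pi_1(P)$ of $\fh$, to the honest group-theoretic equality
\[
h^l=f^k \quad \text{in } \pi_1(P).
\]

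Next I would project this equation to $\pi_1(M)$ via the sequence above. Since the fiber class $f$ maps to $1\in\pi_1(M)$, we obtain $\bar h^l=1$, where $\bar h$ is the image of $h$. Now invoke hypothesis (i-b): since $\pi_1(M)$ is torsion-free and $l\geq 1$ (the case $l=0$ would force $f^k=1$ and hence $k=0$, contradicting $k\in\N$), we conclude $\bar h=1$. Exactness then places $h$ inside $\langle f\rangle$, so $h=f^r$ for some $r\in\Z$.

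The equation $h^l=f^k$ becomes $f^{rl}=f^k$, and since $\pi_1(S^1)=\Z$ has $f$ of infinite order, this yields $rl=k$. Because $k>0$ and $l>0$, necessarily $r>0$, i.e.\ $r\in\N$, and $l=k/r$. Finally $\fh=\ff^r$ in $\tpi_1(P)$, which gives precisely the asserted description of solutions. Taking $k=1$ recovers the fact that $\ff$ is primitive, since then $r\mid 1$ forces $r=1$ and $l=1$.

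The only conceptually subtle point — and the one the whole argument hinges on — is the reduction from the conjugacy-class equation $\fh^l=\ff^k$ to a bona fide equality in $\pi_1(P)$. Without centrality of $f$ one would have to allow $h^l$ to be \emph{any} conjugate of $f^k$, and there would be no reason for $h$ to lie in the fiber subgroup. Once centrality (a consequence of orientability of the principal $S^1$-bundle, noted in the proof of Lemma~\ref{lemma:fiber}) is in hand, the remainder is a short diagram chase using only the torsion-freeness of $\pi_1(M)$ and the infinite order of $f$.
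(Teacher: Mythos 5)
Your proof is correct and follows essentially the same route as the paper: reduce the equation on free homotopy classes to the group equation $h^l=f^k$ in $\pi_1(P)$, project to $\pi_1(M)$, use torsion-freeness to get $\bar h=1$, and conclude $h\in\pi_1(S^1)=\Z$ via the exact sequence. The only difference is that you spell out, via the centrality of $f$ noted in the proof of Lemma~\ref{lemma:fiber}, the reduction step that the paper dismisses with ``clearly,'' which is a welcome clarification but not a new argument.
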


\begin{proof}
  Clearly, it is sufficient to show that the equation $h^l=f^k$ in
  $\pi_1(P)$ has no other solutions than $h=f^r$ and
  $l=k/r$. Projecting to $M$ and denoting the image of $h$ in
  $\pi_1(M)$ by $\bar{h}$, we arrive at $\bar{h}^l=1$. By (i-b),
  $\bar{h}=1$. Hence, $h\in\pi_1(S^1)=\Z$ and the result follows.
\end{proof}

\subsection{Cylindrical contact homology of a pre-quantization circle
  bundle}
\label{sec:ch-P}
Although our proof of Theorem \ref{thm:main} would go through with any
choice of trivializations described in Section \ref{sec:cch}, it is
more convenient to specialize this choice further. Namely, we take the
fiber of $\pi$ over a point $p\in M$ as a reference loop in the class
$\ff$ and the pull-back of a frame in $T_pM$ as the reference
trivialization. Note that then the $k$-th iteration of the fiber is
the reference loop for $\ff^k$, and the reference trivialization for
$\ff^k$ is still the pull-back of a frame in $T_pM$.

With this choice of trivializations, for all $k\in \N$ we have
\begin{equation}
\label{eq:ch}
\HC_*(\xi;\ff^k)=\H_{*+n}(M;\Q).
\end{equation}
In particular, $\HC_n(\xi;\ff^k)=\Q$ for all $k\in\N$.  The
isomorphism \eqref{eq:ch} immediately follows from the Morse-Bott
description of contact homology; see \cite{Bo:thesis,Bo}. Indeed, up
to a shift of degree equal to the mean index (for the Reeb flow of
$\alpha_0$) of the $k$th iteration of the fiber, the contact homology
$\HC_*(\xi;\ff^k)$ is the homology of the base $M$ with the Floer
homology grading, i.e., the grading shifted down by $n$. (We recall
again that throughout the paper the contact homology is graded by the
Conley--Zehnder index.)  Finally, for our choice of trivializations,
the mean index of the (iterated) fiber is zero, and we arrive
at~\eqref{eq:ch}.

\subsection{Proof of Theorem \ref{thm:main}: the contact Conley
  conjecture}
\label{sec:actual_proof}
We may assume that $\alpha$ has finitely many closed Reeb orbits in
the class $\ff$. (Otherwise there is nothing to prove.) Denote these
orbits by $x_1,\ldots,x_r$. Note that all of these orbits are
simple. Indeed, by Lemma \ref{lemma:fiber2}, the class $\ff$ is
primitive, but if one of the orbits $x_i$ were iterated so would be
the class $\ff$. As in the Hamiltonian setting, the proof splits into
two cases.

Assume first that one of the orbits $x_i$, say $x=x_1$, is an
SDM. This is the ``degenerate case'' of the theorem. Consider the unit
disk bundle $E=P\times D^2/S^1$ over $M$ equipped with symplectic form
$\omega_E=[\pi^*\omega+d(\rho\alpha_0)]/2$, where $\rho=|z|^2$, $z\in
D^2$, and $\alpha_0$ is a connection 1-form. (See Section
\ref{sec:main}.)  The symplectic manifold $(E,\omega_E)$ is a strong
symplectic filling of $(P,\alpha_0)$. Rescaling the form and applying
a contact isotopy, we obtain a strong filling, which we simply denote
by $W$, of $(P,\alpha)$. Clearly, $M\hookrightarrow W$ is a homotopy
equivalence, and the inclusion $P\hookrightarrow W$ is homotopic, in
the obvious sense, to the projection $P\to M$. The fiber of $P$ is
contractible in $W$, i.e., the image of the homotopy class $\ff\in
\tpi_1(W)$ of the fiber is $\fc=1\in \tpi_1(W)$. In general, the
filling $W$ is neither exact nor does it have zero first Chern
class. However, $\pi_2(W)=0$ since $M$ is aspherical, and, in
particular, $W$ is symplectically aspherical. Hence, Theorem
\ref{thm:sdm} and Corollary \ref{cor:sdm} apply with $\fc=1$, and the
Reeb flow of $\alpha$ has infinitely many periodic orbits. However,
Corollary \ref{cor:sdm} provides no information on the homotopy
classes of these orbits and an extra argument is needed to show that
the orbits have contractible projections to $M$ or, equivalently, are
contractible in $W$.

Arguing by contradiction, assume that the flow has only finitely many
simple orbits $\{z_i\}$ contractible in $W$. The orbits $x_i$ are
among these orbits, but there can be other orbits with contractible
projections to $M$. By Theorem \ref{thm:sdm}, given $\eps>0$, for
every sufficiently large $k$, the Reeb flow has a closed Reeb orbit
$y_k$ contractible in $W$ with action in the range
$(kc,\,kc+\eps)$. The orbit $y_k$ need not be simple.

As is pointed out in Section \ref{sec:fiber}, an element $\fy\in
\tpi_1(P)$ (e.g., $[y_k]$ or $[z_i]$) is trivial in
$\tpi_1(W)=\tpi_1(M)$ if and only if $\fy=\ff^l$ for some
$l\in\Z$. Thus $y_k$ can only be an iteration of one of the orbits
$z_i$, i.e., $y_k=z_i^{m_k}$. Now exactly the same argument as the
proof of Corollary \ref{cor:sdm} shows that this is impossible.

The second case is when none of the orbits $x_i$ is an SDM. This is
the so-called ``non-degenerate case'' of the theorem since, for
instance, none of these orbits is an SDM when all $x_i$ are weakly
non-degenerate. In spirit, the proof of this case goes back to
\cite{SZ}. Let $k$ be a sufficiently large prime. To prove the
theorem, it suffices to show that the class $\ff^k$ contains a simple
periodic orbit. Then, by Lemma \ref{lemma:fiber}, all iterates
$\ff^k$, $k\in\N$, are distinct and hence so are the orbits.

Set $\Delta_i=\Delta(x_i)$.  We require $k$ to be admissible for all
orbits with $\Delta_i=0$ and large enough to ensure that
$k|\Delta_i|>2n$ for all orbits with $\Delta_i\neq 0$.

To show that there exists a closed Reeb orbit in the class $\ff^k$, we
again argue by contradiction. Indeed, assume the contrary. Then, by
Lemma \ref{lemma:fiber2}, every orbit in $\ff^k$ is necessarily of the
form $x_i^k$. We claim that $\HC_n(\alpha;\ff^k)=0$, which contradicts
\eqref{eq:ch}. It suffices to show that $\HC_n(x_i^k)=0$ for all
orbits $x_i$. To prove this, note that
$$
\supp \HC_*(x_i^k)\subset [k\Delta_i-n,\,k\Delta_i+n],
$$
by \eqref{eq:hom} and \eqref{eq:supp}, and hence $\HC_n(x_i^k)=0$ when
$\Delta_i\neq 0$. When $\Delta_i=0$, there are further two cases to
consider. The first one is when $x_i$ is weakly non-degenerate. Then
so is $x_i^k$, since $k$ is admissible, and hence
$\HC_n(x_i^k)=0$. (For the end-points of the interval in
\eqref{eq:supp} are not in the support.)  The second case is when
$x_i$ is totally degenerate. Then $\dim\HC_*(x_i^k)\leq
\dim\HC_*(x_i)$ by \eqref{eq:lch-bound} together with the ``moreover''
part and, in particular, $\HC_n(x_i^k)=\HC_n(x_i)=0$ because $x_i$ is
not an SDM. \hfill $\qed$

\begin{Remark}
\label{rmk:sdm_enough}
As is clear from the proof above, one can relax the assumption that
the orbits are weakly non-degenerate in the second part of the theorem
and replace it by the requirement that none of the orbits is an SDM.
\end{Remark}

\subsection{Proof of Theorem \ref{thm:magnetic}}
\label{sec:pf-magn} The level $P_\eps$ is a circle bundle over $M$,
isomorphic to the unit circle bundle $P\subset T^*M$. As is well
known, when $g\neq 1$, the form $\omega$ has contact type on $P_\eps$
for all small $\eps>0$. Indeed, the first Chern class of the
$S^1$-bundle $\pi\colon P\to M$ is equal to the Euler class $e(M)$.
Since $g\neq 1$, there exists $\kappa\in\R$ such that $[\sigma]=\kappa
e(M)$. Fix a connection form $\alpha_0$ on $P$ with curvature
$\sigma/\kappa$ as in Section \ref{sec:main} and set
$\lambda_0=\kappa\alpha_0$. Clearly, $d\lambda_0=\pi^*\sigma$ and
$\lambda_0$ is a contact form with Reeb vector field $\kappa^{-1}
R_0$. Next denote by $\lambda_1$ the restriction of the standard
Liouville form $pdq$ to $P$. Let us identify $P_\eps$ with $P$ via the
fiberwise dilation by $\sqrt{2\eps}$. Then the pull back of
$\omega|_{P_\eps}$ to $P$ is
$d(\lambda_0+\sqrt{2\eps}\lambda_1)$. When $\eps>0$ is small, this is
a contact form.

Denote by $\alpha$ the resulting contact primitive of
$\omega|_{P_\eps}$. In other words, $\alpha$ is the push-forward of
$\lambda_0+\sqrt{2\eps}\lambda_1$ to $P_\eps$. The underlying contact
structure $\ker\alpha$ is isotopic, for all small $\eps>0$, to the
pre-quantization contact structure $\ker\lambda_0=\ker\alpha_0$.  It
readily follows now that when $g\geq 2$ conditions (i) and (ii) of
Theorem \ref{thm:main} are satisfied.

Furthermore, the Reeb flow on $P_\eps$ is index--admissible. Indeed,
first note that every closed, homologous to zero Reeb orbit $x$ on
$P_\eps$ has mean index $\Delta(x)<2\chi(M)+o(1)$ as $\eps\to 0$; see
\cite{Be}. Fix now a sufficiently large $T$, depending on the geometry
of the magnetic field and the metric. Then for any small
non-degenerate perturbation $\tal$ of $\alpha$, every closed Reeb
orbit $\tx$ of $\tal$ of period less than $T$ is close to an orbit of
$\alpha$, and hence also has mean index $\Delta(\tx)<2\chi(M)+o(1)$
when $\tx$ is contractible in $P$. Long orbits $\tx$ of $\tal$, i.e.,
the orbits with period greater than or equal to $T$, do not
necessarily arise from the orbits of $\alpha$. However, such orbits
automatically have large negative mean index $-O(T)<2\chi(M)$ (as
$T\to\infty$); see \cite{GG:wm}. In either case, the Conley--Zehnder
index of $\tx$ is no greater than $2\chi(M)+1+o(1)<0$ by
\eqref{eq:indexes}.

To finish the proof of the first part of the theorem, the existence of
infinitely many periodic orbits with contractible projections to $M$,
it remains to apply Theorem~\ref{thm:main}.

To show that there is indeed a simple orbit in the class $\ff^k$ for
every large prime $k$, we can focus on the case where one of the
simple orbits $x_1,\ldots,x_r$ in the class $\ff$ is an SDM. For the
``non-degenerate case'' is also covered by (the proof of) Theorem
\ref{thm:main}; see Remark \ref{rmk:sdm_enough}. We do this by
applying Theorem \ref{thm:sdm} to a conveniently chosen filling of
$P_\eps$ with an extra component added to it. Namely, let us fix a
metric with constant negative curvature on $M$ and let $P'\subset
T^*M$ be a high energy level with respect to this metric.  Clearly,
$P'$ is a contact type hypersurface in $T^*M$, and the Reeb flow on
$P'$ is topologically conjugate to the hyperbolic geodesic flow on
$M$; see, e.g., \cite {Gi:newton}. (Moreover, one can find such a
hyperbolic metric with area form proportional, with constant factor,
to the magnetic field $\sigma$. Then the twisted geodesic flow on $P'$
is smoothly conjugate to the geodesic flow; cf.\ \cite{Ar61}.)  As a
consequence, all closed orbits of the flow on $P'$ have
non-contractible projections to $M$.

Consider now the subset $W$ of $T^*M$ bounded by $P'$ and
$P_\eps$. This is a strong filling of $P'\sqcup P_\eps$; see
\cite{McD} and also \cite{Ge}. The filling $W$ is exact and
$c_1(TW)=0$. Hence, Theorem \ref{thm:sdm} applies with $\fc=\ff$, the
homotopy class of the fiber. The inclusion $P_\eps\hook W$ is a
homotopy equivalence, and the Reeb flow on $P'$ has no closed orbits
in any class $\ff^k$. Let $x$ be an SDM in the class $\ff$. Note that
now, as in Section \ref{sec:ch-P}, we can assume without loss of
generality that $\Delta(x)=0$.  By Theorem \ref{thm:sdm} applied to
$W$, for every sufficiently large $k$, there exists a periodic orbit
$y$ of the flow on $P_\eps$ such that $[y]\in\ff^k$ and
$0<\Delta(y)<2n+1$. By Lemma \ref{lemma:fiber}, when $k$ is prime,
either $y$ is simple or it is an iteration of one of the orbits $x_i$
with $\Delta(x_i)>0$. The latter is clearly impossible when $k$ is so
large that $k\Delta(x_i)>2n+1$ for all $x_i$ with $\Delta(x_i)>0$.
This completes the proof of the theorem.  \hfill $\qed$

\end{document}